\newtheorem{definition}{Definition}
\newtheorem{theorem}{Theorem}
\newtheorem{proposition}{Proposition}
\newtheorem{lemma}{Lemma}
\newtheorem{remark}{Remark}
\newtheorem*{nonumbertheorem}{Theorem}
\newtheorem*{harpthm}{Theorem \ref{thm:harpe}}
\begin{document}

\title{On the end depth and ends of groups}

\author{M. Giannoudovardi}
\address{Department of Mathematics,
University of Athens, Athens, Greece}
\email{marthag@math.uoa.gr}
\keywords{End depth, Ends, Growth, Virtually cyclic group.}

\begin{abstract}
We prove that any finitely generated one ended group has linear end depth. Moreover, we give alternative proofs to
theorems relating the growth of a finitely generated group to the number of its ends.
\end{abstract}

\maketitle

\section{Introduction}

The topology of a group at infinity is an important asymptotic invariant of groups (see \cite{GM}). In particular
the question which groups are simply connected at infinity is relevant to topology (\cite{Davis}, \cite{Wr}).
In order to study finitely generated groups that are simply connected at infinity (sci), Otera in \cite{otera},
introduces the function $V_1(r)$, measuring, ``in which way'' a group is sci.
The growth of this function, called sci growth and denoted by $V_1$,
is a quasi-isometry invariant for finitely generated groups. Expecting the existence of a group with super-linear $V_1$, Otera introduces the end depth function (see Definition \ref{defenddepth}), $V_0(r)$, a 0 - dimensional analogue of the sci growth for one ended groups.
 The end depth function measures 
 the ``depth'' of the bounded components of the complement of the ball $B(r)$ in the Cayley graph of the group.
 The growth of this function is a quasi-isometry invariant for finitely generated groups and it is called the
 end depth of the group. Otera \cite{otera} shows that given a group in which the growth of $V_0$
 is super-linear, we can construct a group where the function $V_1$ has super-linear growth:
\begin{nonumbertheorem}[Otera]
If $G=A\underset{H}{*}B$ is the free product with amalgamation over a (sub)group $H$ which is one ended with super-linear end depth and $A,B$ are sci groups, then $G$ is simply connected at infinity with super-linear $V_1$.
\end{nonumbertheorem}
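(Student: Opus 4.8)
The plan is to pass to the Cayley $2$--complex $\widetilde X$ of $G=A\underset{H}{*}B$ and work with its structure as a tree of spaces over the Bass--Serre tree $T$: the vertex spaces are copies of the Cayley $2$--complexes $\widetilde X_A,\widetilde X_B$, the edge spaces are copies of $\mathrm{Cay}(H)$, and each edge space $E$ separates $\widetilde X$, the two sides being indexed by the two components of $T$ with the open edge removed. Since $H$ is one ended, $G$ is one ended; since $A,B$ are sci and $H$ is one ended, it is standard that $G$ is semistable at infinity, so $\pi_1^{\infty}(G)$ is defined and $G$ is sci precisely when $V_1(r)<\infty$ for all $r$. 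Fix a base vertex $v_0$ of type $A$ and an incident edge $e_0$, with edge space $E_0\cong\mathrm{Cay}(H)$.

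First I would check that $G$ is sci, i.e.\ that $V_1(r)<\infty$ for every $r$. Given a loop $\gamma$ far from the base point, its image in $T$ spans a finite subtree $T_\gamma$; prune $T_\gamma$ leaf by leaf. If $u$ is a leaf, every maximal subarc of $\gamma$ inside $\widetilde X_u$ has both endpoints on the incident edge space $E$; using that $\mathrm{Cay}(H)$ is one ended (its bounded complementary pieces reach depth only $V_0^{H}$), reconnect these endpoints by an arc in $E$ avoiding a prescribed ball, and, using that the factor carried by $u$ is sci, homotope $\gamma$ — staying outside that ball — so that it no longer penetrates $\widetilde X_u$ past $E$. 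Iterating pushes $\gamma$ into a single vertex space, where sci of the factor kills it. Taking the outer radius large enough in terms of $r$, $V_0^{H}$ and $V_1^{A},V_1^{B}$ keeps every homotopy off $B(r)$, so $\pi_1^{\infty}(G)=1$.

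The heart is the super--linearity of $V_1^{G}$, and this is where the hypothesis on $H$ is used. Pick $r_n\to\infty$ and bounded components $C_n$ of $\mathrm{Cay}(H)\setminus B_H(r_n)$ whose depth is at least $\phi(r_n)$ with $\phi$ super--linear; thus $C_n$ contains a vertex at $H$--distance $\ge\phi(r_n)$, while every path in $\mathrm{Cay}(H)$ from $C_n$ to the end of $H$ crosses $B_H(r_n)$, so the ``neck'' of $C_n$ sits at radius $\approx r_n$. Planting these pockets in $E_0$, I would build a loop $\gamma_n$ in $\widetilde X$ that genuinely crosses $E_0$ — of the form $\gamma_n=\alpha_n^{+}\alpha_n^{-}$, where $\alpha_n^{\pm}$ meets $E_0$ only in its two endpoints, two deep vertices of $C_n$, and lies otherwise on the $+$ (resp.\ $-$) side — chosen so that $\gamma_n$ is linked with $C_n$, i.e.\ cannot be swept off $C_n$ in $\widetilde X$ without a disk crossing the neck. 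Then I would prove: (i) $\gamma_n$ lies outside $B_{\widetilde X}(\psi(r_n))$ for some super--linear $\psi$; and (ii) every disk $D$ in $\widetilde X$ with $\partial D=\gamma_n$ meets $B_{\widetilde X}(c\,r_n)$. For (ii): put $D$ in general position with respect to all edge spaces, remove innermost intersection circles by capping them inside single vertex spaces until $D$ is confined to a bounded union of blocks around $E_0$; there the linking of $\gamma_n$ with $C_n$ forces $D\cap E_0$ to contain a subdisk spanning the neck of $C_n$, so $D$ enters $B_{\widetilde X}(c\,r_n)$. Together (i) and (ii) give $V_1^{G}(c\,r_n)\ge\psi(r_n)$, so $V_1^{G}$ is super--linear.

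The main obstacle is (ii), along with the correct choice of $\gamma_n$. Because $\mathrm{Cay}(H)$ need not be undistorted in $A$ or in $B$, one must work to place $\gamma_n$ far out in the ambient metric and — the serious point — to prevent a filling disk from ``escaping around'' the pocket through a vertex block rather than across its neck. Ruling that out is precisely the innermost--disk / cut--and--paste localization of the essential part of $D$ to a controlled neighbourhood of the single edge space $E_0$; once it is in place, one--endedness of $H$ (no route around $C_n$ except through $B_H(r_n)$) yields the bound. A lesser point to monitor throughout is the comparison between the intrinsic metric of $E_0\cong\mathrm{Cay}(H)$, in which the depth estimate lives, and the ambient metric of $\widetilde X$, in which $V_1$ is measured.
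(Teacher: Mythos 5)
First, a contextual point: the paper does not prove this statement. It is quoted from Otera's thesis purely as motivation, and the paper's own main result, Theorem \ref{enddepth}, shows that every finitely generated one ended group has \emph{linear} end depth, so the hypothesis ``$H$ is one ended with super-linear end depth'' is never satisfied and the quoted theorem is vacuously true. There is therefore no proof in the paper to compare yours against; I can only assess your argument on its own terms.

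On its own terms, the decisive step --- your claim (ii) --- contains a genuine gap. You take $\gamma_n=\alpha_n^{+}\alpha_n^{-}$ meeting $E_0$ in exactly two points, \emph{both} of which are deep vertices of the bounded component $C_n$. A filling disk $D$ in general position meets the preimage of the separating edge space $E_0$ in circles together with exactly one arc joining those two boundary points, and that arc is a path in $\mathrm{Cay}(H)$ between two points of $C_n$. Since $C_n$ is a connected component of $\mathrm{Cay}(H)\smallsetminus B_H(r_n)$, such a path can stay entirely inside $C_n$ and never cross the ``neck''; your assertion that linking forces $D\cap E_0$ to contain a subdisk spanning the neck is unjustified and, as stated, false. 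Cutting $D$ along that arc leaves two halves, each filling a loop lying on one side of $E_0$, and sci of $A$ and of $B$ lets those be capped far from the basepoint, so nothing drives $D$ into $B_{\widetilde X}(c\,r_n)$. The end-depth hypothesis only bites if the two intersection points of $\gamma_n$ with $E_0$ lie in \emph{different} components of $\mathrm{Cay}(H)\smallsetminus B_H(r_n)$ --- one deep in $C_n$, one in the unbounded component --- for then the mandatory arc of $D\cap E_0$ must pass through $B_H(r_n)$. Separately, the distortion issue you set aside as ``lesser'' is not: the depth $\phi(r_n)$ of $C_n$ is measured in $d_H$, and if $H$ is distorted in $G$ the deep vertex may be close to the basepoint of $\widetilde X$, destroying the super-linear lower bound (i) on how far out $\gamma_n$ sits. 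Both points would need to be repaired before the sketch becomes a proof.
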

One may also remark that a group with non-linear $V_0$ has
dead end elements (see \cite{clearytaback}). So a group with non-linear
$V_0$ has dead end elements with respect to any generating
set (to our knowledge there are no such examples in the
literature).

In this paper, we show that the function $V_0$ of any one ended group is linear (see Theorem
\ref{enddepth}).

In section \ref{ends} we give an alternative proof of the following theorem that was first proven by Erschler
\cite{harpe}, based on the Varopoulos inequality and answers
question VI.19, posed by Pierre de la Harpe in \cite{harpe}:
\begin{harpthm}
Let $G=\langle S\rangle$ be a finitely generated group and $X=\Gamma(G,S)$.
If there exists a sequence of positive integers $\{r_i\}_{i\geqslant 1}$ such that $\lim\limits_{i\to\infty}r_i=\infty$ and $\lim\limits_{i\to\infty}|S(r_i)|<\infty$, then $G$ is virtually cyclic.
\end{harpthm}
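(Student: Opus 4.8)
The plan is to count ends. We may assume $G$ is infinite, since finite groups are virtually cyclic and satisfy the hypothesis trivially; then $S(r)\neq\emptyset$ for every $r\geqslant 0$, and we fix $C$ with $|S(r_i)|\leqslant C$ for all $i$. First I would bound the number of ends of $G$. For each $r$, choosing in every infinite component of $X\setminus B(r)$ a vertex nearest to $1$ yields, for distinct components, distinct vertices of $S(r+1)$ (a component's nearest vertex lies at distance exactly $r+1$: otherwise a neighbour of it on a geodesic to $1$ would lie closer in the same component); hence $X\setminus B(r)$ has at most $|S(r+1)|$ infinite components. This number is nondecreasing in $r$ and its limit is the number of ends $e(G)$, so evaluating along $r=r_i-1\to\infty$ gives $e(G)\leqslant C<\infty$. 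By the Hopf--Freudenthal theorem $e(G)\in\{0,1,2\}$, hence $e(G)\in\{1,2\}$ since $G$ is infinite; and a two-ended finitely generated group is virtually cyclic \cite{harpe}. So everything reduces to proving that $G$ cannot be one-ended, equivalently that a one-ended finitely generated group has $|S(r)|\to\infty$.

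For this I would use Theorem~\ref{enddepth}: a one-ended $G$ has linear end depth, so $V_0(r)\leqslant\lambda r+\mu$ for constants $\lambda,\mu$ and all $r$. Fix any $i$. As above, $X\setminus B(r_i)$ has only finitely many components, each meeting $S(r_i+1)$; exactly one, call it $U_i$, is infinite, and every other component, being bounded, has depth at most $\lambda r_i+\mu$ and so lies in $B\bigl((\lambda+1)r_i+\mu\bigr)$. Therefore the complement $F_i:=X\setminus U_i$ is \emph{finite}, with $|F_i|\geqslant|B(r_i-1)|$, while the vertices of $U_i$ having a neighbour in $F_i$ all lie in $S(r_i+1)$, so they number at most $|S(r_i+1)|\leqslant|S|\,C$. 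Deleting that bounded set of vertices from $X$ therefore leaves $1$ inside a \emph{finite} component containing the ball $B(r_i)$; since $r_i\to\infty$ and $|B(r_i-1)|\to\infty$, a one-ended Cayley graph would admit arbitrarily large finite subsets separated from the rest of the graph by a uniformly bounded number of vertices.

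Ruling this out is the step I expect to be the main obstacle. The idea: inside such a bounded-boundary finite set one can find, by local finiteness, a sub-ball of radius $\rho_i\to\infty$ (a ball of fixed radius has a bounded number of vertices, so a large set cannot be covered by boundedly many small balls); translating its centre to $1$ by vertex-transitivity, one gets a ball $B(\rho_i)$ cut off from infinity by a bounded set of vertices all lying outside $B(\rho_i)$, and then a pigeonhole argument on this bounded ``wall'', combined with the fact that the complement of a finite set in a one-ended group has exactly one infinite component, should produce the desired contradiction. Once this is established, all cases are covered and $G$ is virtually cyclic.
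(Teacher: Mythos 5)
Your reduction is sound as far as it goes: counting the infinite components of $X\smallsetminus B(r)$ by nearest vertices on $S(r+1)$ correctly gives $e(G)\leqslant C<\infty$, and Hopf plus Stallings correctly reduces the theorem to showing that $G$ cannot be one-ended. (Incidentally, the appeal to Theorem \ref{enddepth} buys you nothing: $X\smallsetminus U_i$ is finite simply because a locally finite graph has finitely many components off a finite ball, each bounded and hence finite, and the quantitative depth bound is never used afterwards.) This reduction also matches the paper's overall architecture, which likewise ends by combining $e(G)<\infty$ with $e(G)>1$ and Hopf's theorem.

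The gap is exactly where you flag it, and it is not a routine obstacle but the entire content of the theorem. Having arbitrarily large finite sets separated from infinity by at most $K$ vertices is not, by itself, in visible tension with one-endedness: the separating wall $W_i\subset S(r_i+1)$ has bounded cardinality but unbounded diameter (it contains points near $\gamma_{r_i+1}$ and near $\gamma_{-r_i-1}$), so it is not a compact set witnessing a second end, and the fact that the complement of a finite set has one infinite component says nothing about it directly. The paper supplies two ideas absent from your sketch: first, a diagonal argument on the pairwise distances within the small spheres $S(r_t)$, yielding a partition of each sphere into clusters of uniformly bounded diameter whose mutual distances tend to infinity, so that the cluster $K_t$ containing $\gamma_{r_t}$ separates, inside the neighbourhood $N(K_t,D_t)$ with $D_t\to\infty$, two regions $A_t$ and $B_t$ of diameter tending to infinity; second, Proposition \ref{OBSS}, which uses the group action to translate these bounded-diameter local separators onto a single compact set $K_1$ and shows, by pulling back a hypothetical connecting path of fixed length $l$ and comparing with $r_j-2\to\infty$, that the two sides lie in different unbounded components, whence $e(G)>1$. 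Neither the clustering step nor the translation-and-limit argument appears in your ``pigeonhole on the wall'' plan, and Remark \ref{remark1} shows the latter step genuinely requires homogeneity (the analogous statement fails for $[0,\infty)$), so no purely metric counting argument can close the gap. As written, the proof is incomplete at its decisive step.
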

By $|S(r_i)|$ we denote the number of vertices in the sphere of
radius $r_i$ in the Cayley graph of the group $G$. Another proof, using different methods, was given by Timar \cite {Timar}. We give a proof, using elementary methods, without using the Varopoulos inequality.

In section \ref{lg} we show a stronger result. We relate the number
of ends of a finitely generated group with the growth of the spheres
 in its Cayley graph (see Theorem \ref{thm:sphere}). This Theorem is a weaker version of similar theorems proven by Justin in \cite{justin}, by Wilkie and Van Den Dries in \cite{wilandr} and by Imrich and Seifter in \cite{imrichseifter}. 
Also, in October 2009, Shalom and Tao proved a more general result for groups of polynomial growth in \cite{shalomtao}, by refining Kleiner's work in \cite{kleiner}.

\section{Preliminaries}
The definitions and notation introduced in this section will be used throughout this paper.\\
Let $(X,d)$ be a metric space and $A$, $B$ non-empty subsets of $X$. The \textit{distance of the sets} $A$ and $B$, $d(A,B)$ is defined by:
$$d(A,B)=\inf\{d(x,y)\mid x\in A,y\in B\}$$
We denote by $|A|$ the number of elements in  the set $A$. For  $r>0$, we define the $\textit{r}-$\textit{neighbourhood} of $A$, $N(A,r)$ by:
$$N(A,r)=\{y\in X\mid d(y,A)< r\}$$
For any $x\in X$, we denote by $S(x,r)$ ($B(x,r)$) the \textit{sphere} (\textit{ball}) in $X$ of radius $r$, centered at $x$.\\
We recall the definition of the number of ends of a metric space:

Let $(X,d)$ be a locally compact, connected metric space. For any compact subset, $K$, of $X$ we denote the number of unbounded connected components of
 $X\smallsetminus K$ by $e(X,K)$. Then, the \textit{number of ends} of $X$, denoted by $e(X)$, is the supremum, over all compact subsets $K$ of $X$, of $e(X,K)$:
$$e(X)=\sup\{e(X,K)\mid K\subset X compact\}$$
If $e(X)=1$, we say that $X$ is a \textit{one ended} metric space.

Let $G=\langle S\rangle$ be a finitely generated group. For any $A\subset G$ and $x\in G$ we denote by $xA$ the set $\{xa\mid a\in A\}$. Also, we denote by $\Gamma(G,S)$ the Cayley graph of $G$ with respect to the finite generating set $S$ and $d_S$ the word metric in $\Gamma(G,S)$. If $e$ is the identity element of $G$, for any positive integer $r$, we write $S(r)$ ($B(r)$) for the sphere $S(e,r)$ (ball $B(e,r)$) in $\Gamma(G,S)$. The \textit{size of a sphere} $S(g,r)$ in $X$ is the number of vertices (elements of $G$) in that sphere and we denote it by $|S(g,r)|$. We remark that $(\Gamma(G,S),d_S)$ is a locally compact, connected metric space, thus $e(\Gamma(G,S))$ is defined. It is a well known fact that this is independent of the finite generating set chosen. Thus, the \textit{number of ends}, $e(G)$, of $G$ is defined to be the number of ends of its Cayley graph, $\Gamma(G,S)$, with respect to a finite generating set $S$. 
We say that a finitely generated group is one ended if its Cayley graph, with respect to a finite generating set, is a one ended metric space. Note that the number of ends is a quasi-isometry invariant of finitely generated groups.\\
Regarding the number of ends of a finitely generated group, we recall the following important theorem of Hopf \cite{hhopf}:
\begin{theorem}\label{hopf}
A finitely generated group $G$ has either 0,1,2 or infinitely many ends.
\end{theorem}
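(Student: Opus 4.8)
The plan is to exploit the transitive action of $G$ on $X=\Gamma(G,S)$ by left translations, which are graph automorphisms (hence isometries), together with a doubling trick: assuming $e(G)=k$ with $3\leqslant k<\infty$, I will build a compact set whose complement has at least $2k-2>k$ unbounded components, contradicting $e(X)=k$. A preliminary reduction, valid for any locally finite connected graph, is that a finite number of ends is attained on a ball. Indeed, if $K\subseteq K'$ are compact then $e(X,K)\leqslant e(X,K')$, since each unbounded component of $X\smallsetminus K'$ sits inside a unique (necessarily unbounded) component of $X\smallsetminus K$, and every unbounded component of $X\smallsetminus K$ contains at least one unbounded component of $X\smallsetminus K'$ (it is infinite and locally finite, and we delete only finitely many vertices). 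Since every compact set lies in some ball $B(m)$, this gives $e(X,B(m))=k$ for all large $m$. Fix such an $m$; the ball $B(m)$ is connected, and let $C_1,\dots,C_k$ denote the unbounded components of $X\smallsetminus B(m)$ (there may be further, bounded ones).

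Next I would choose $g\in C_1$ with $d_S(e,g)>2m$; this exists because $C_1$ is unbounded. Then $gB(m)=B(g,m)$ is disjoint from $B(m)$, and, being connected and meeting $C_1$, it lies entirely inside $C_1$. Translating the above decomposition by $g$, the unbounded components of $X\smallsetminus gB(m)$ are $gC_1,\dots,gC_k$; relabel them $C'_1,\dots,C'_k$ so that $C'_1$ is the one containing $B(m)$. (One checks that $B(m)$ does lie in an unbounded component of $X\smallsetminus gB(m)$: each $C_i$ with $i\geqslant 2$ is connected, avoids $gB(m)$, and is joined to $B(m)$ by an edge, so $C_2,\dots,C_k$ all lie in the same component of $X\smallsetminus gB(m)$ as $B(m)$, forcing that component to be unbounded.) Consequently $C'_2,\dots,C'_k$, being disjoint from $C'_1\supseteq B(m)\cup C_2\cup\dots\cup C_k$ and from $gB(m)$, are contained in $C_1\smallsetminus gB(m)$. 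Now I claim that in $X\smallsetminus(B(m)\cup gB(m))$ the sets $C_2,\dots,C_k,C'_2,\dots,C'_k$ are $2(k-1)$ distinct unbounded connected components: each $C_i$ ($i\geqslant 2$) is a component of $X\smallsetminus B(m)$ that already avoids $gB(m)$, hence a component of the smaller complement; symmetrically each $C'_j$ ($j\geqslant 2$) is a component of $X\smallsetminus gB(m)$ avoiding $B(m)$; and the first family lies outside $C_1$ while the second lies inside $C_1$, so the two families are disjoint. Since $B(m)\cup gB(m)$ is compact, $e(X)\geqslant 2k-2$, which contradicts $e(X)=k$ as soon as $k\geqslant 3$. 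Hence a finite number of ends can only be $0$, $1$ or $2$.

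The step I expect to demand the most care is the combinatorial bookkeeping around the placement $gB(m)\subseteq C_1$: one must verify that each $C_i$ and each $C'_j$ is a genuine connected component (not merely a connected subset) of the doubled complement, that all of them are unbounded, and that the ``old'' family $\{C_i\}_{i\geqslant 2}$ and the ``translated'' family $\{C'_j\}_{j\geqslant 2}$ are disjoint. The auxiliary facts — that the supremum defining $e(X)$ is attained on a ball, that $gB(m)$ falls inside a single unbounded component, and that deleting a finite set from an infinite locally finite connected graph always leaves an unbounded component — are routine, but they should be stated cleanly since the whole argument rests on them.
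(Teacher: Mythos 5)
Your argument is correct. Note first that the paper does not actually prove this statement: Theorem \ref{hopf} is quoted with a citation to Hopf, so there is no in-paper proof to compare against. What you give is the classical ``doubling'' proof: monotonicity $e(X,K)\leqslant e(X,K')$ for $K\subseteq K'$ forces the finite supremum $k$ to be attained on some ball $B(m)$; a translate $gB(m)$ with $d_S(e,g)>2m$ sits inside one unbounded component $C_1$ of $X\smallsetminus B(m)$; and the complement of $B(m)\cup gB(m)$ then exhibits at least $2k-2$ unbounded components, contradicting $k\geqslant 3$. All the delicate points are handled: that each unbounded component of $X\smallsetminus K$ contains an unbounded component of $X\smallsetminus K'$ (surjectivity, hence genuine monotonicity rather than just a containment of families); that $B(m)\cup C_2\cup\dots\cup C_k$ is connected and avoids $gB(m)$, so the translated components $C'_2,\dots,C'_k$ are forced into $C_1$ and hence are disjoint from the original family $C_2,\dots,C_k$; and that a component of $X\smallsetminus B(m)$ avoiding $gB(m)$ remains a maximal connected subset of the doubled complement. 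The only cosmetic caveat is that the paper treats $X$ as the geometric realization of the Cayley graph, so one should either work vertex-wise throughout (as you implicitly do) or say a word about the half-open edges straddling $S(m)$; this does not affect the argument. Your proof is a sound, self-contained substitute for the external citation.
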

It is clear that a finitely generated group $G$ is finite if and only if $e(G) = 0$.\\
On the other hand, from Stallings' classification Theorem \cite{stallings} we have that a finitely generated group $G$ has exactly two ends if and only if $G$ has an infinite cyclic, 
finite index subgroup. Therefore, we have the following equivalences:
$$e(G)=2\Leftrightarrow G \text{ is virtually }\mathbb{Z}\Leftrightarrow G\text{ is quasi isometric to } \mathbb{Z}$$
Finally we define the growth of a function, which we will need in section \ref{edf}:\\
Let $f,g:\mathbb{R}_+\to\mathbb{R}_+$. We say that the \textit{growth of the function} $f$ is at most the growth of the function $g$ and
 we write $f\prec g$, if there exist real constants $a_1>0,a_2>0,a_3$ such that, for any $x\in\mathbb{R}_+$, the following inequality holds:
$$f(x)\leqslant a_1g(a_2x)+a_3$$ 
The functions $f$ and $g$ have the \textit{same growth}, denoted by $f\sim g$, if $f\prec g$ and $g\prec f$.\\
Note that the relation $f\sim g$ is an equivalence relation. The \textit{growth rate} of a function $f$ is defined as the corresponding equivalence class of the function $f$.
Lastly, we say that $f$ has \textit{linear growth} if $f(x)\sim x$.

\section{The End Depth Function}\label{edf}
In this section we examine the growth of the end depth function of a one ended group. We remark that this notion is a $0$-dimensional analogue of the sci growth for one ended groups and it was introduced by Otera \cite{otera}.\\
We start by giving the definition of the end depth function that is due to Otera.
\begin{definition}\label{defenddepth}Let $G=\langle S\rangle$ be a finitely generated one ended group and $X=\Gamma(G,S)$. For any $r> 0$, we denote by $N(r)$ the set of all $k\in\mathbb{R}$ such that any two points in $X\smallsetminus B(k)$ can be joined by a path
 outside $B(r)$. The function $V^X_0(r)=\inf N(r)$ is called the \textit{end depth function} of $X$.
\end{definition}
The idea of the end depth function can be grasped more easily if we consider the bounded connected components of $X\smallsetminus B(r)$:
\begin{remark}\label{remark2} Let $G=\langle S\rangle$ be a finitely generated group, $X=\Gamma(G,S)$, $d=d_S$ and $e$ the identity element of $G$. 
For any $r>0$ the set $X\smallsetminus B(r)$ has finitely many connected components. We denote by $U_r$ the unique unbounded connected component 
and by $B_r$ the union of the bounded components of $X\smallsetminus B(r)$.
\begin{figure}[ht]
    \centering
        \includegraphics[scale=0.8]{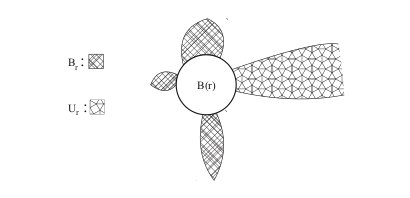}
       \caption{}
\label{Fig:1}
\end{figure}\\
Then, we have the following:
\begin{enumerate}
\item Clearly, $B_r=\emptyset$ if and only if $V_0^X(r)=r$.
\item Suppose that $B_r\neq\emptyset$, i.e. $X\smallsetminus B(r)$ has at least one bounded connected component. 
Then, for any $x\in B_r$ any path that joins $x$ to an element $y\in U_r$ must pass through $B(r)$. 
Thus, for any $x\in B_r$, $V_0^X(r)\geqslant d(e,x)$, so:
$$V^X_0(r) \geqslant\max\{d(e,x)\mid x\in B_r\}$$
On the other hand, for any $y,z\in X$ with $d(e,y),d(e,z)>\max\{d(e,x)\mid x\in B_r\}$ we have that $y,z\in U_r$. This implies that $y$ and $z$ can be joined by a path outside $B(r)$, so $y,z\in X\smallsetminus B(V^X_0(r))$.
It follows that:
$$V^X_0(r) =\max\{d(e,x)\mid x\in B_r\}$$
From the latter equality we see how, in a sense, $V^X_0$ measures the depth of the bounded connected components of $X\smallsetminus B(r)$.\\
Furthermore, there exists a bounded connected component, $A_r$, of $X\smallsetminus B(r)$ and an element $a\in A_r$ such that
$$V^X_0(r)=d(e,a)=\max\{d(e,x)\mid x\in B_r\}$$
\begin{figure}[ht]
    \centering
        \includegraphics[scale=0.8]{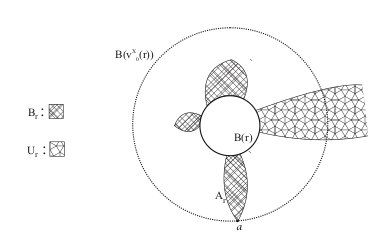}
       \caption{}
\label{Fig:2}
\end{figure}
\end{enumerate}
\end{remark}
The end depth function depends on the choice of the generating set, but its growth rate does not. Actually, it is a quasi-isometry invariant for finitely generated groups \cite{otera}. 
Therefore, we recall the following definition that is due to Otera \cite{otera}.
\begin{definition}
Let $G=\langle S\rangle$ be a one ended group and $X=\Gamma(G,S)$.
The \textit{end depth} of $G$ is the growth rate of the function $V_0^X$.
\end{definition}

\begin{theorem}\label{enddepth}
The end depth of a one ended group is linear.
\end{theorem}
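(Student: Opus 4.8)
\noindent\emph{Proof strategy.} I would prove the two halves of $V_0^X\sim\mathrm{id}$ separately. That $\mathrm{id}\prec V_0^X$ is immediate from Remark~\ref{remark2}, since $V_0^X(r)\geq r$ for all $r>0$ (taking $a_1=a_2=1$, $a_3=0$). So everything reduces to producing a constant $C=C(G,S)$ with $V_0^X(r)\leq Cr$ for all $r$. If $B_r=\emptyset$ this is trivial; otherwise, by Remark~\ref{remark2}, $V_0^X(r)=d(e,a)$ for some vertex $a$ lying in a bounded connected component $A$ of $X\smallsetminus B(r)$, so it suffices to bound $d(e,a)$ linearly in $r$ for every such pair $(A,a)$.

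The first step — the one I am confident about — is a ball--containment lemma. With $(A,a)$ as above, set $\rho:=d(e,a)-r-1$. Then $B(a,\rho)\subseteq A$: any $x$ with $d(a,x)\leq\rho$ has $d(e,x)\geq d(e,a)-\rho=r+1>r$, hence $x\notin B(r)$, and as $B(a,\rho)$ is connected and contains $a\in A$ it lies in the component $A$. Two consequences follow. First, by vertex-transitivity of $X$, $|A|\geq|B(a,\rho)|=|B(\rho)|$. Second, the vertex boundary of $A$ lies in $S(r)$: a vertex $y\notin A$ adjacent to some $x\in A$ has $d(e,y)\geq d(e,x)-1\geq r$, and if $d(e,y)>r$ then $y\in X\smallsetminus B(r)$ would lie in the same component as $x$, i.e.\ in $A$ — so $d(e,y)=r$; hence $|\partial A|\leq|S(r)|$. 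Consequently, a super-linear $V_0^X$ would furnish a sequence $r_i\to\infty$ with $V_0^X(r_i)/r_i\to\infty$ and, for $\rho_i:=V_0^X(r_i)-r_i-1$ (so $\rho_i/r_i\to\infty$), bounded connected sets $A_i$ with $|A_i|\geq|B(\rho_i)|$ and $|\partial A_i|\leq|S(r_i)|$, each $A_i$ being a bounded component of $X\smallsetminus B(r_i)$ that, after left-translation by the relevant group element, contains the ball $B(\rho_i)$ about $e$ while its only access to infinity is across a sphere of the far smaller radius $r_i$.

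It remains to contradict $e(X)=1$ from this picture, and this is the main obstacle. When $G$ has exponential growth the contradiction is a one-line volume count: if $G$ is moreover non-amenable there is a Cheeger constant $\varepsilon>0$ with $|\partial A_i|\geq\varepsilon|A_i|$, so $|S(r_i)|\geq\varepsilon|B(\rho_i)|\geq\varepsilon c\lambda^{\rho_i}$ for some $\lambda>1$, while $|S(r_i)|\leq|B(r_i)|\leq\mu^{r_i}$ for a constant $\mu$; hence $\rho_i\leq(\mathrm{const})\cdot r_i$, contradicting $\rho_i/r_i\to\infty$. The general case, where no isoperimetric inequality is available, is where one must use one-endedness itself rather than a growth surrogate. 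The cleanest target is the bound $d(a,U_r)\leq Cr$, where $U_r$ is the unbounded component of $X\smallsetminus B(r)$: since any path from $a\in A$ to $U_r$ must cross $B(r)$, such a bound gives $d(e,a)-r\leq d(a,U_r)\leq Cr$, finishing the proof. Establishing $d(a,U_r)\leq Cr$ amounts to showing that $U_r$ is coarsely dense at scale $r$ — every vertex lies within $O(r)$ of it — a statement that fails for two-ended groups and is exactly the content that $e(X)=1$ should supply; making this quantitative and uniform in $r$ (e.g.\ by an induction on $r$, or by a limiting argument on the left-translated configurations above) is the heart of the matter.
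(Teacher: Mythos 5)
Your reduction and your ball--containment lemma are correct and coincide with the opening moves of the paper's own argument: if $V_0^X(r)=d(e,a)$ with $a$ in a bounded component $A$ of $X\smallsetminus B(r)$ and $d(e,a)$ is much larger than $r$, then a large ball centred at $a$ sits inside $A$, and the only access of $A$ to the rest of $X$ is across $S(r)$. But the proof is not complete, and you say so yourself: the contradiction with $e(X)=1$ in the general case ``is the heart of the matter,'' and that is exactly the step you have not supplied. The isoperimetric count works only under a non-amenability hypothesis (and even an exponential-growth amenable group, such as a lamplighter group, escapes it), while the proposed target $d(a,U_r)\leqslant Cr$ is precisely the statement to be proved rather than a lemma you have established. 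So there is a genuine gap.

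The idea that closes it is to use the deep element $a$ itself as a translation. Assume $V_0^X(r)>4r$, so that $d(a,B(r))>3r$ and hence $aB(r)=B(a,r)\subset A$. One checks that $aA$ meets $A$ (translate a point of $A$ adjacent to $B(r)$), while $aA$ is disjoint from $aB(r)\subset A$ and $|aA|=|A|$, so $aA\smallsetminus A\neq\emptyset$; since $aA$ is connected, it must meet $B(r)$, say at $x=az$ with $z\in A$. Next take an infinite geodesic ray $\gamma=(\gamma_0,\gamma_1,\dots)$ in the unbounded component $U$ of $X\smallsetminus B(r)$ with $d(\gamma_0,B(r))=1$. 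Its translate $a\gamma$ starts at $a\gamma_0\in A$ (because $d(a\gamma_0,a)=r+1<d(a,X\smallsetminus A)$) and, being infinite, must leave the finite set $B(r)\cup B_r$, so it reaches $U$ and therefore crosses $B(r)$ at some $y=a\gamma_m$. Join $x$ to $y$ by a path inside the connected set $B(r)$ and apply $a^{-1}$: the result is a path from $z\in A$ to $\gamma_m\in U$, which must itself cross $B(r)$, producing a vertex of $B(r)\cap aB(r)\subset B(r)\cap A=\emptyset$ --- a contradiction. This gives the uniform bound $V_0^X(r)\leqslant 4r$ with no growth or amenability assumptions, which is what your argument is missing.
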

\begin{proof} 
Let $G=\langle S\rangle$ be a one ended group, $X=\Gamma(G,S)$ and $d=d_S$. We argue by contradiction that, for any integer $r\geqslant 2$, $V_0^X(r)\leqslant 4r$.\\
Suppose that there is a positive integer $r\geqslant 2$, such that $V_0^X(r)>4r$. Then, as stated in Remark \ref{remark2}, there exists a bounded connected component, $A$, of $X\smallsetminus B(r)$ and an element $a\in A$, such that $V_0^X(r)=d(e,a)$. 
Note that $d(a,B(r))>3r$, therefore $d(B(a,r),B(r))>2r$.\\
We consider the left action of $a$ on $X$. Then $aB(r)=B(a,r)\subset A$ and $aA\cap A\neq\emptyset$. Moreover, since $aA\cap aB(r)=\emptyset$ and $|A|=|aA|$, 
we have that $aA\smallsetminus A\neq\emptyset$. Therefore $aA\cap B(r)\neq\emptyset$. \\
Recall that $G$ is one ended, so there exists a unique unbounded connected component, $U$, of $X\smallsetminus B(r)$ and an infinite geodesic path 
$\gamma = (\gamma_0,\gamma_1,\dots)$ of vertices in $U$, such that $d(\gamma_0,B(r))=1$.
\begin{figure}[ht]
    \centering
        \includegraphics[scale=0.7]{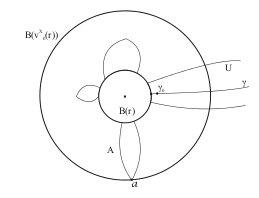}
       \caption{}
\label{Fig:3}
\end{figure}
Clearly, $d(a\gamma_0,a)=r+1$ and since $d(a,X\smallsetminus A)>3r$, it follows that $a\gamma_0\in A$. 
On the other hand, since $a\gamma=(a\gamma_0,a\gamma_1,\dots)$ is an infinite path while $B(r)\cup B_r$, where $B_r$ is the union of the connected components of $X\smallsetminus B(r)$, is finite, there exists $n>0$, such that 
$a\gamma_{n}\in U$. 
\begin{figure}[ht]
    \centering
        \includegraphics[scale=0.7]{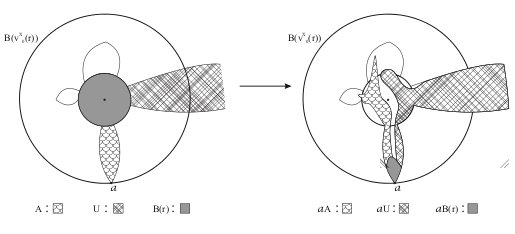}
       \caption{}
\label{Fig:4}
\end{figure}
Therefore, the path $\gamma'=(a\gamma_0,a\gamma_1,\dots,a\gamma_{n})$ joins an element of $A$ to an element of $U$. 
But $A$ and $U$ are connected components of $X\smallsetminus B(r)$, 
so $\gamma'$ passes through $B(r)$. Thus, there exists $m\in\{0,1,\dots,n\}$, such that $y=a\gamma_{m}\in B(r)$.\\
Let $x\in aA\cap B(r)$. Then $x=az$, for some $z\in A$. The elements $x$ and $y$ are joined by a path 
$\varepsilon = (\varepsilon_0=x,\varepsilon_1,\dots,\varepsilon_k=y)$ in $B(r)$, for some $k\in\mathbb{N}$. The sequence:
$$\varepsilon'=a^{-1}\varepsilon=(a^{-1}\varepsilon_0,a^{-1}\varepsilon_1,\dots,a^{-1}\varepsilon_k)$$
is a path that joins $a^{-1}x=z\in A$ to $a^{-1}y=\gamma_{m}\in U$. Therefore, $\varepsilon'$ passes through $B(r)$. Thus, there exists $j\in\{1,2,\dots,k\}$ 
such that $a^{-1}\varepsilon_{j}\in B(r)$. But then, $\varepsilon_{j}\in B(r)\cap aB(r)\subset B(r)\cap A$, which is a contradiction since $B(r)\cap A=\emptyset$.\\
In conclusion, for any integer $r\geqslant 2$, we have that $V_0^X(r)\leqslant 4r$. Hence, $V_0^X$ has linear growth.
\end{proof}

\section{On Ends of Groups}\label{ends}
The main objective of this section is to present an alternative approach to question VI.19, posed by Pierre de la Harpe in \cite{harpe}. 
Theorem \ref{thm:harpe} answers this 
question and it was first proven by Erschler
\cite{harpe}, based on the Varopoulos inequality, and later by Timar \cite {Timar}, using different methods.
We give a geometric proof, without using the Varopoulos inequality. 
\begin{proposition}\label{OBSS}
Let $G=\langle S\rangle$ be a finitely generated group and $X=\Gamma(G,S)$. Suppose that there is a positive integer $n$ and a sequence of positive integers $\{r_i\}_i$ so that, 
for any $i\in\mathbb{N}$, there exists a compact
subset $K_i$ of $X$, with the following properties:
\begin{enumerate}
\item $diam(K_i)<n$
\item $N(K_i,r_i)\smallsetminus K_i$ has at least two connected components, $A_i$ and $B_i$
\item $\lim\limits_{i\to\infty}r_i=\lim\limits_{i\to\infty}diam(A_i)=\lim\limits_{i\to\infty}diam(B_i)=\infty$
\end{enumerate}
Then $e(G)>1$.
\end{proposition}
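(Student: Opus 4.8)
The plan is to produce a single compact set $K$ witnessing $e(X,K)\geq 2$ by choosing one of the $K_i$ with $r_i$ large, and then showing that $A_i$ and $B_i$ each contain points arbitrarily far from $K_i$, hence each meets an unbounded component of $X\smallsetminus K_i$, and that these unbounded components are distinct. The key point to exploit is that $A_i$ and $B_i$ lie in different connected components of $N(K_i,r_i)\smallsetminus K_i$, so any path from a point of $A_i$ to a point of $B_i$ that stays outside $K_i$ must leave the neighbourhood $N(K_i,r_i)$; combined with $\mathrm{diam}(K_i)<n$, leaving $N(K_i,r_i)$ forces such a path to have length at least roughly $r_i$, which is where largeness of $r_i$ enters.

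First I would fix $i$ large and abbreviate $K=K_i$, $r=r_i$, $A=A_i$, $B=B_i$. Since $\mathrm{diam}(A)$ is large, $A$ contains a point $a$ with $d(a,K)$ as large as we like (using $\mathrm{diam}(K)<n$ to convert diameter of $A$ into distance from $K$: if $\mathrm{diam}(A)>2n$, say, then some point of $A$ is at distance $>n/2$ from some fixed basepoint of $K$, hence from $K$, but we actually want it unbounded, so we instead argue that $A$ meets $X\smallsetminus N(K,r')$ for $r'$ close to $r$, or simply that $A$ being a component of $N(K,r)\smallsetminus K$ with large diameter must ``reach the boundary'' of $N(K,r)$). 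Let $C_A$ be the connected component of $X\smallsetminus K$ containing $A$, and $C_B$ the component containing $B$. I claim $C_A$ and $C_B$ are both unbounded and distinct. Unboundedness: a bounded component of $X\smallsetminus K$ has finite diameter, say bounded by some $D$; but I will show any component meeting $A$ has diameter $\geq \mathrm{diam}(A)\to\infty$, and similarly for $B$, so for $i$ large these components cannot be any fixed bounded one — here one must be slightly careful, since there could be infinitely many bounded components, but a component containing $A$ literally contains $A$, so its diameter is at least $\mathrm{diam}(A)$; a genuinely bounded component has some finite diameter, and we just need $\mathrm{diam}(A_i)$ and $\mathrm{diam}(B_i)$ to exceed it, which holds for large $i$ since $\mathrm{diam}(A_i),\mathrm{diam}(B_i)\to\infty$ — wait, this needs the bounded components to have uniformly bounded diameter, which is false in general. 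The correct argument: pick $a\in A$ with $d(a,K)>r/2$ (possible once $\mathrm{diam}(A)$ is large, as $A\subset N(K,r)\smallsetminus K$ is connected, meets points near $K$ and, having large diameter, also meets points far from $K$; more carefully, if every point of $A$ were within $r/2$ of $K$ then... actually $A$ could still have large diameter — so instead take $a\in A$ realizing a point far from a fixed basepoint $k_0\in K$, giving $d(a,k_0)\geq \mathrm{diam}(A)/2$, hence $d(a,K)\geq \mathrm{diam}(A)/2-n$).

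Then I would run the path argument: suppose for contradiction $C_A=C_B=:C$. Take $a\in A,b\in B$ with $d(a,K),d(b,K)$ both $\geq \mathrm{diam}(A_i)/2 - n$ and $\geq \mathrm{diam}(B_i)/2-n$ respectively (so both $\to\infty$ with $i$); since $a,b\in C$ they are joined by a path $p$ in $X\smallsetminus K$. Because $a\in A$ and $b\in B$ lie in different components of $N(K,r)\smallsetminus K$, the path $p$ cannot stay inside $N(K,r)$, so there is a vertex $v$ on $p$ with $d(v,K)\geq r$. Now also run this for the \emph{separate} claim of unboundedness: if $C$ were bounded, then... honestly the cleanest route is: choose $i$ so large that $r_i$, $\mathrm{diam}(A_i)$, $\mathrm{diam}(B_i)$ all exceed $2(\mathrm{diam}(K_i)) = 2n$ and moreover exceed the diameter of every bounded component of $X\smallsetminus K_i$ that meets $N(K_i,r_i)$ — but there is no uniform bound. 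I expect this to be the main obstacle: ruling out the possibility that $A$ or $B$ sits inside a huge-but-bounded component of $X\smallsetminus K$. I would resolve it by observing that if $C_A$ is bounded, then $X\smallsetminus K$ still has at least one unbounded component (as $X$ is an infinite, locally finite connected graph, $X\smallsetminus K$ is unbounded), and iterating over the sequence: for each $i$ where $C_{A_i}$ (or $C_{B_i}$) is bounded, that bounded component has diameter $\geq\mathrm{diam}(A_i)$; but distinct $i$'s may give the same or nested $K_i$'s only if $\mathrm{diam}(K_i)<n$ is fixed — actually the $K_i$ need not be nested. The honest fix, and the one I would write up: by condition (3), pass to $i$ with $\mathrm{diam}(A_i),\mathrm{diam}(B_i)>3n+2r_i$; then show that \emph{within} $N(K_i,3r_i)\smallsetminus K_i$, say, $A_i$ and $B_i$ must still be separated (since a path between them outside $K_i$ leaving $N(K_i,r_i)$ but staying in $N(K_i,3r_i)$ can be excluded by a length/area count using $\mathrm{diam}(K_i)<n$), deduce $e(X,K_i)\geq 2$ directly by exhibiting two distinct unbounded components, and conclude $e(G)\geq 2>1$, hence by Hopf's Theorem \ref{hopf} the case $e(G)=1$ is impossible. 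The heart of the write-up is this separation-persists-at-larger-scale estimate; everything else is bookkeeping with diameters and the definition of $e(X)$.
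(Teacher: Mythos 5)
There is a genuine gap, and it is precisely at the two places you flag as problematic. Your plan is to fix one large $i$ and show directly that the components $C_{A}$ and $C_{B}$ of $X\smallsetminus K_i$ containing $A_i$ and $B_i$ are unbounded and distinct. Neither step can be completed from the hypotheses for a \emph{single} $i$. For unboundedness: a component of $X\smallsetminus K_i$ containing $A_i$ has diameter at least $\mathrm{diam}(A_i)$, but that only says it is \emph{large}, not unbounded, and there is no uniform bound on the diameters of bounded components as $i$ varies. The paper's Remark \ref{remark1} (the space $[0,\infty)$ with $K_r=[r+1,r+2]$) realizes exactly this failure: all hypotheses of the proposition hold there, $A_r=[0,r+1)$ has diameter tending to infinity, yet every $A_r$ lies in a bounded component and the space is one ended. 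So any argument, like yours, that uses only the metric/graph structure of $X$ and not the group action cannot work. For distinctness: your path argument only shows that a path in $X\smallsetminus K_i$ from $A_i$ to $B_i$ must leave $N(K_i,r_i)$; that is not a contradiction, since the path may simply travel far out and return. Your proposed repair --- a ``separation-persists-at-larger-scale'' estimate confining the path to $N(K_i,3r_i)$ --- has no basis: nothing bounds how far such a path wanders, and no ``length/area count'' from $\mathrm{diam}(K_i)<n$ is available.

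The missing idea is to use homogeneity of the Cayley graph to bring the whole sequence to a \emph{single} compact set. Since $\mathrm{diam}(K_i)<n$ and $G$ is finitely generated, there are finitely many isometry types of such sets, so after passing to a subsequence there are $g_j\in G$ with $g_jK_j=K_1$; a further subsequence arranges that all the translated sets $g_jA_j$ meet a common component $A$ of $X\smallsetminus K_1$ and all the $g_jB_j$ meet a common component $B$ (the number of components of $X\smallsetminus K_1$ is finite). Now $\mathrm{diam}(A)\geqslant\mathrm{diam}(A_j)\to\infty$ and likewise for $B$, which gives unboundedness; and if $A=B$, a \emph{fixed} path $\gamma$ of length $l$ in $X\smallsetminus K_1$ joining a point of $\bigcap_j g_jA_j\cap A_1$ to a point of $\bigcap_j g_jB_j\cap B_1$ pulls back under $g_j^{-1}$ to paths of the \emph{same} length $l$ joining $A_j$ to $B_j$ outside $K_j$; for $r_j$ large these must lie in $N(K_j,r_j)$, contradicting that $A_j$ and $B_j$ are different components of $N(K_j,r_j)\smallsetminus K_j$. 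It is this uniform length $l$, obtained by translating everything back to $K_1$, that converts your ``the path must leave $N(K,r)$'' observation into an actual contradiction; without the translation step the argument does not close.
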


\begin{figure}[ht]
   \centering
       \includegraphics[scale=1]{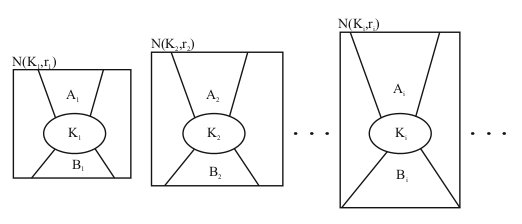}
       \caption{}
    \label{fig:5}
\end{figure}

\begin{proof}
We may assume that, for all $i\geqslant 1$, $K_i$ is a graph.\\
$G$ is finitely generated and for any i, the set $K_i$ has diameter less than $n$, so the number of edges in $K_i$ is less than $|S|^{2n}$. Therefore, there exists a 
subsequence, $\{K_{i_j}\}_j$, such that any two sets of this subsequence are isometric. 
We re-index this subsequence to avoid double indices, so we write $K_j$ for $K_{i_j}$. The action of $G$ on $X$ is by isometries, so there exists a subsequence of $\{K_{j}\}_j$, that we still denote by $\{K_j\}_j$ for convenience, so that for any $j>1$ there is $g_j\in G$ such that $g_jK_{j}=K_{1}$. 
Again, as $G$ is finitely generated and for any $j>1$, $diam(K_j)< n$, we conclude that the number of connected components of 
$X\smallsetminus K_j$ is uniformly bounded. Therefore, there exists yet another subsequence of $\{K_{j}\}_j$, denoted also for convenience by $\{K_{j}\}_j$, 
so that
$$\bigcap\limits_{j>1} g_jA_{j}\cap A_{1}\neq\emptyset\quad\text{and}\quad\bigcap\limits_{j>1} g_jB_{j}\cap B_{1}\neq\emptyset$$
Now, let $A$ and $B$ be connected components of $X\smallsetminus K_{1}$ such that $A_{1}\subset A$ and $B_{1}\subset B$. 
Then, for all $j$, we have that $g_jA_{j}\subset A$ and $g_jB_{j}\subset B$, so $diam(A)\geqslant diam(A_{j})$ and $diam(B)\geqslant diam(B_{j})$. This implies that
$diam(A)=\infty$ and $diam(B)=\infty$.

Finally, we will argue by contradiction that $A$ and $B$ are different connected components of $X\smallsetminus K_{1}$. 
So, suppose that $A$ and $B$ are connected in $X\smallsetminus K_{1}$. Let $x\in\bigcap\limits_{j>1} g_jA_{j}\cap A_{1}$ and 
$y\in\bigcap\limits_{j>1} g_jB_{j}\cap B_{1}$, so that $d_S(x,K_{1})=d_S(y,K_{1})=1$. Then, there exists a finite path, $\gamma$, of length $l\in\mathbb{N}$ in $X\smallsetminus K_{1}$ that joins 
$x$ to $y$. Clearly, for any $j>1$, $\gamma_j=g_j^{-1}\gamma$ is a finite path of length $l$, that joins $x_j=g_j^{-1}x\in A_j$ to $y_j=g_j^{-1}y\in B_j$. Thus there exists $m\in\mathbb{N}$ so that, for any $j>m$, the path $\gamma_j$ is contained in $N(K_{j},r_j)$. Note that, for any $j>m$, the elements $x_j$ and $y_j$ are connected outside $N(K_j,r_j)$ and that their distance from $X\smallsetminus N(K_j,r_j)$ is greater than $r_j-2$. Therefore, we reach to the conclusion that, for any $j>m$, $l>r_j-2$. 
This however contradicts our hypothesis that $\lim\limits_{i\to\infty}r_i=\infty$.

Hence, $K_{1}$ is a compact subset of $X$ with, at least, two unbounded connected components. Thus $e(G)>1$.
\end{proof}

\begin{remark}\label{remark1}
It is worth mentioning that Proposition \ref{OBSS}, does not hold for arbitrary metric spaces. 
For example, consider the space $X=[0,\infty)$ with the usual metric. Then, $X$ is a one ended metric space and it is easy to see that all the conditions of Proposition \ref{OBSS} hold for $X$:\\
For any $r\in\mathbb{N}$, we set $K_r=[ r+1,r+2]$.
Then, for any $r\in\mathbb{N}$, $K_r$ is a compact subset of $X$ with $diam(K_r)<2$. Moreover, the connected components of $X\smallsetminus K_r$ are the sets  
$A_r = [0,r+1)$ and $B_r=(r+2,\infty)$, with $diam(A_r)> r$ and $diam(B_r)=\infty$.
\begin{figure}[ht]
   \centering
       \includegraphics[scale=1]{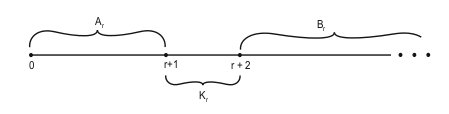}
       \caption{}
    \label{Fig:6}
\end{figure}
\end{remark}

In the following theorem we will use Proposition \ref{OBSS} to give, as mentioned in the introduction,
 an alternative approach to question VI.19 posed by Pierre de la Harpe in \cite{harpe}.

\begin{theorem}\label{thm:harpe}
Let $G=\langle S\rangle$ be a finitely generated group and $X=\Gamma(G,S)$.
If there exists a sequence of positive integers $\{r_i\}_{i\geqslant 1}$ such that $\lim\limits_{i\to\infty}r_i=\infty$ and $\lim\limits_{i\to\infty}|S(r_i)|<\infty$, then $G$ is virtually cyclic.
\end{theorem}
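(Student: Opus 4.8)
The plan is to show that $e(G)=2$ and then quote the equivalence $e(G)=2\iff G$ is virtually $\mathbb{Z}$ recorded in the preliminaries. Two harmless reductions first. If $|S(r_i)|=0$ for some $i$ then $B(r_i)=G$, so $G$ is finite and \emph{a fortiori} virtually cyclic; hence we may assume $G$ is infinite, so that every sphere is non-empty. The numbers $|S(r_i)|$ are then positive integers with a finite limit, so after passing to a subsequence (still written $\{r_i\}$, with $r_i\to\infty$) we may assume $|S(r_i)|=c$ for a fixed integer $c\geqslant 1$ and all $i$.

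Next, $G$ has finitely many ends. An edge of $X$ joining a vertex of $X\smallsetminus B(r_i)$ to a vertex of $B(r_i)$ must have its $B(r_i)$-endpoint on $S(r_i)$, so at most $|S|\cdot c$ edges leave $S(r_i)$; since every unbounded component of $X\smallsetminus B(r_i)$ is joined to $S(r_i)$ by such an edge and distinct components use distinct edges, $X\smallsetminus B(r_i)$ has at most $|S|\cdot c$ unbounded components. As the number of unbounded components of $X\smallsetminus B(r)$ is non-decreasing in $r$ and tends to $e(G)$ when $r\to\infty$, this gives $e(G)\leqslant |S|\cdot c<\infty$; Hopf's Theorem \ref{hopf} then forces $e(G)\in\{1,2\}$.

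The substance of the argument is to rule out $e(G)=1$, for which I would use Proposition \ref{OBSS}. Fix a geodesic ray $\gamma=(\gamma_0=e,\gamma_1,\dots)$ (it exists because $X$ is infinite and locally finite). The aim is to construct, for each $i$, a subset $K_i\subseteq S(r_i)$ containing $\gamma_{r_i}$, whose diameter is bounded independently of $i$ and for which $d_S\big(K_i,S(r_i)\smallsetminus K_i\big)\to\infty$ (this distance being $+\infty$ when $K_i=S(r_i)$). Granting such $K_i$, choose integers $r_i'\to\infty$ with $r_i'<d_S\big(K_i,S(r_i)\smallsetminus K_i\big)$. Then the geodesic segments $\gamma_{r_i-r_i'+1},\dots,\gamma_{r_i-1}$ and $\gamma_{r_i+1},\dots,\gamma_{r_i+r_i'-1}$ both lie in $N(K_i,r_i')\smallsetminus K_i$: each such vertex is within $r_i'$ of $\gamma_{r_i}\in K_i$ and, having word length $\neq r_i$, avoids $S(r_i)\supseteq K_i$. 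Each of these segments is connected and has diameter $r_i'-2\to\infty$, and they lie in different components of $N(K_i,r_i')\smallsetminus K_i$: a path joining them would run from $B(r_i)$ to $X\smallsetminus B(r_i)$ and hence meet $S(r_i)$, necessarily at a vertex of $S(r_i)\smallsetminus K_i$, which is at distance $>r_i'$ from $K_i$ and so lies outside $N(K_i,r_i')$. Proposition \ref{OBSS} then gives $e(G)>1$, whence $e(G)=2$ and $G$ is virtually $\mathbb{Z}$, in particular virtually cyclic.

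The main obstacle is exactly the construction of the sets $K_i$: one must show that the at most $c$ vertices of $S(r_i)$ split into ``clusters'' of uniformly bounded diameter that are pairwise very far apart, and then take $K_i$ to be the cluster containing $\gamma_{r_i}$. Taking $K_i=S(r_i)$ outright fails, since a sphere may have few vertices yet large diameter — for $\mathbb{Z}\times(\mathbb{Z}/2\mathbb{Z})$ one has $|S(r)|=4$ but $diam(S(r))=2r$. Grouping the $c$ vertices by their mutual distances is the natural device; the delicate point — which is really the content of the theorem — is to rule out the possibility that these vertices are too evenly spread over $S(r_i)$ to admit such a grouping. Handling this seems to require, besides the bound $|S(r_i)|=c$, the linear end-depth estimate $V_0^X(r)\leqslant 4r$ of Theorem \ref{enddepth} — which confines the bounded components of $X\smallsetminus B(r_i)$ to $B(4r_i)$ — together with the translation argument used in its proof.
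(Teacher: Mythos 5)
Your overall strategy coincides with the paper's: reduce to $G$ infinite with $|S(r_i)|$ constant along a subsequence, observe $e(G)<\infty$, and then feed a bounded-diameter ``cluster'' $K_i\subseteq S(r_i)$ containing $\gamma_{r_i}$, well separated from the rest of the sphere, into Proposition \ref{OBSS}; your verification that such clusters would satisfy the hypotheses of the proposition (the two geodesic segments on either side of $S(r_i)$ lying in distinct components of $N(K_i,r_i')\smallsetminus K_i$) is essentially the paper's choice of $A_t$ and $B_t$. But the construction of the clusters, which you explicitly leave as ``the main obstacle,'' is the actual content of the proof, so as it stands the argument has a genuine gap. Moreover, the direction you point to for filling it --- the estimate $V_0^X(r)\leqslant 4r$ from Theorem \ref{enddepth} and its translation argument --- is not what is needed and is not obviously usable here: that theorem concerns the depth of bounded complementary components of a one ended group, whereas the clustering problem is about the mutual distances of the $c$ points of $S(r_i)$, and no ``evenly spread'' configuration has to be ruled out at all.

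The paper closes the gap with an elementary compactness argument that you are missing. Label the points of each sphere as $x_1(l),\dots,x_m(l)$ with $x_1(l)=\gamma_{r_l}$ and $x_m(l)=\gamma_{-r_l}$ for a \emph{bi-infinite} geodesic $\gamma$ (not just a ray). Since there are only finitely many pairs $(j,k)$, a diagonal extraction yields a subsequence along which every pairwise distance $d(x_j(t),x_k(t))$ converges to some $a_{jk}\in[0,\infty]$. The relation $a_{jk}<\infty$ is then an equivalence relation on $\{1,\dots,m\}$ (transitivity by the triangle inequality), and the class $K$ of the index $1$ gives clusters $K_t$ with $\lim_t \mathrm{diam}(K_t)=\max_{j,k\in K}a_{jk}<\infty$ and $d(K_t,S(r_t)\smallsetminus K_t)\to\infty$. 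The partition is not a singleton because $a_{1m}=\lim_t d(\gamma_{r_t},\gamma_{-r_t})=\lim_t 2r_t=\infty$ --- this is where the bi-infinite geodesic is used, and it is the only place where ``spreading'' enters. In other words, you do not need uniform clustering for every $i$, only along some subsequence, and that is automatic from compactness of $[0,\infty]$; no input beyond $|S(r_i)|=c$ and the existence of $\gamma$ is required.
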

In the case that $G$ is infinite, we will, upon passing to a subsequence, split, for any $t$, the set $S(r_t)$ into 2 subsets, $K_t$ and $F_t$, whose distance tends to infinity and so that $\{diam(K_t)\}_t$ is bounded. Finally, we show that we can apply Proposition \ref{OBSS} for the sets $K_t$.
\begin{proof}
This is trivial if $G$ is finite, so suppose that $G$ is infinite, thus $e(G)\geqslant 1$.\\
For simplicity, let $d=d_S$. There exists a bi-infinite geodesic path, $\gamma = (\dots,\gamma_{-1},\gamma_0, \gamma_1,\dots)$, of vertices in $X$, 
where $\gamma_0$ is the identity element of $G$. For all $i\geqslant 1$, $X\smallsetminus S(r_i)$ has an unbounded connected component $U_i$, 
such that, for any $j\geqslant r_i+1$
$$\gamma_{j}\in U_i$$
Obviously, then, for all $i\geqslant 1$
$$U_{i+1}\subset U_i$$
On the other hand, $\lim\limits_{i\to\infty}|S(r_i)|<\infty$, so the sequence $\{|S(r_i)|\}_i$ is bounded. Hence $e(G)<\infty$ and there exist a positive integer $m$ and a subsequence $\{r_{i_l}\}_l$, such that $\lim\limits_{l\to\infty}r_{i_l}=\infty$ and, for all $l>0$, $|S(r_{i_l})|=m$. As usual, we re-index this subsequence to avoid double indices, so we write $r_l$ for $r_{i_l}$. 
For any $l>0$, let 
$$S(r_l)=\{x_1(l),x_2(l),\dots,x_m(l)\}$$
We may assume that, for all $l>0$, $x_1(l)=\gamma_{r_l}$ and $x_m(l)=\gamma_{-r_l}$. For any $l>0$, we consider, for all $j,k\in\{1,\dots,m\}$, the distances $d(x_j(l),x_k(l))$.
Using a diagonal argument, we can extract a subsequence $\{r_{l_t}\}_t$, so that the distances of any two elements of the sphere $S(r_{l_t})$ converge in $\mathbb{R}\cup\{\infty\}$. Again, to keep notation simple we denote this sequence by $\{r_{t}\}_t$. Therefore, we have that, for any $j,k\in\{1,\dots,m\}$:
$$\lim\limits_{t\to\infty}d(x_j(t),x_k(t)) = a_{jk}\in\mathbb{R}\cup\{\infty\}$$
This implies that there exists a partition $\mathcal{P}$ of the set $\{1,\dots,m\}$ so that any $j,k\in\{1,\dots,m\}$ belong to the same set of the partition if and only if $a_{jk}<\infty$.\\
We note that:
$$a_{1m}=\lim\limits_{t\to\infty}d(\gamma_{r_t},\gamma_{-r_t})=\infty$$
Therefore, the partition $\mathcal{P}$ is not a singleton.\\
Now, let $Y\in\mathcal{P}$. For any positive integer $t$, we define the $t-$corresponding set of $Y$ in $X$ as follows:
$$Y_t=\{x_j(t)\mid j\in Y\}$$
Let $K\in\mathcal{P}$, so that $1\in K$. We will show that we can apply Proposition \ref{OBSS} for the $t-$corresponding sets $K_t$. For any $t>0$, $x_1(t)=\gamma_{r_t}\in K_t$, thus $d(K_t,U_t)=1$. Furthermore:
$$\lim\limits_{t\to\infty}diam(K_t)=\lim\limits_{t\to\infty}\sup \{d(x_j(t),x_k(t))\mid j,k\in K\}<\infty$$
So, there exists $M>0$ such that, for all $t>0$,
$$diam(K_t)<M$$
We denote the set $\{1,\dots,m\}\smallsetminus K$ by $F$. Considering that $\mathcal{P}$ is not a singleton, we have that $F\neq\emptyset$. The $t-$corresponding set of $F$ 
in $X$ is the set $F_t=S(r_t)\smallsetminus K_t$. For any $t>0$, we set
$$D_t=\frac{1}{2}d(F_t,K_t)$$
Note that, since $K$ and $F$ are distinct, non empty sets of $\mathcal{P}$, 
we have that:
$$\lim\limits_{t\to\infty}D_t=\frac{1}{2}\lim\limits_{t\to\infty}\inf\{d(x_j(t),x_k(t))\mid j\in F, k\in K\}=\infty$$
Without loss of generality we assume that, for all $t>0$, $D_t>2$.
For any $t>0$, we let
$$N_t=N(K_t,D_t)$$
Then, for any $t>0$, we have that $d(N_t,F_t)>1$, thus $N_t\cap F_t=\emptyset$.\\
Finally, for any $t>0$, let
$$A_t=N_t\cap U_t\quad \text{and}\quad B_t=N_t\cap(B(r_t)\smallsetminus K_t)$$ 
Then, for all $t>0$, $\gamma_{r_t+1}\in A_t$ and $\gamma_{r_t-1}\in B_t$, so the sets $A_t$ and $B_t$ are non-empty. Moreover, it is immediate from the definitions that the sets $A_t$ and $B_t$ are different connected components of $N_t\smallsetminus K_t$.
\begin{figure}[ht]
   \centering
       \includegraphics[scale=1]{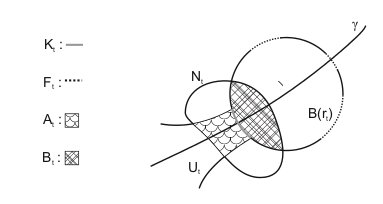}
       \caption{}
    \label{fig:7}
\end{figure}\\
Finally, for any $t>0$, we have that $\gamma_{r_t+D_t-1}\in A_t$ and $\gamma_{r_t-D_t+1}\in B_t$. Hence:
$$diam(A_t)\geqslant d(\gamma_{r_t+1},\gamma_{r_t+D_t-1})=D_t-2$$
and 
$$diam(B_t)\geqslant d(\gamma_{r_t-1},\gamma_{r_t-D_t+1})=D_t-2$$
therefore
$$\lim\limits_{t\to\infty}diam(A_t)=\lim\limits_{t\to\infty}diam(B_t)= \infty$$
From Proposition \ref{OBSS}, it follows that $e(G)>1$. We recall that $e(G)<\infty$, so from Hopf's Theorem (Theorem \ref{hopf}) we derive that $e(G)=2$, thus $G$ is virtually $\mathbb{Z}$.
\end{proof}

\section{Linear Growth}\label{lg}
The main objective of this section is to give a characterization for groups that are virtually cyclic. More specifically, in Theorem \ref{thm:sphere}, we give a condition for the growth of spheres in $G$ 
that results to $G$ being virtually $\mathbb{Z}$. This theorem is, as mentioned in the introduction, a weaker version of theorems proven by Justin \cite{justin},
 Wilkie and Van Den Dries \cite{wilandr}, Imrich and Seifter \cite{imrichseifter}, Shalom and Tao \cite{shalomtao}. The techniques used in this paper are quite elementary 
and the proof we give has a strong geometric flavour.\\
We start by giving some definitions.

\begin{definition}
Let $(Y,d)$ be a metric space, $a,m$ positive integers, with $m\geqslant 2$, and $A_1,\dots,A_m$ non empty subsets of $Y$. 
We say that $\mathcal{A}=(\{A_i\}_i,m,a)$ is a \textit{gl-partition} of the space $Y$, if
\begin{enumerate}
\item for any $i,j\in\{1,\dots,m\}$, either $A_i=A_j$ or $A_i\cap A_j=\emptyset$
\item $Y=\bigsqcup\limits_{i=1}^{m}A_i$
\item for any $i\in\{1,\dots,m\}$, $d(A_i,Y\smallsetminus A_i)>a\cdot\max\{diam(A_j),1\mid j=1,\dots,m\}$
\end{enumerate}
\end{definition}

\begin{definition}
Let $(Y,d)$, $(Z,d')$ be metric spaces with gl-partitions $\mathcal{A}=(\{A_i\}_i,k_1,a)$, $\mathcal{B}=(\{B_i\}_i,k_2,b)$ respectively. 
We say that $\mathcal{A}$ and $\mathcal{B}$ are \textit{similar gl-partitions}, if:
\begin{enumerate}
\item $a=b$
\item $k_1=k_2$
\item After some rearrangement if necessary, for all $i=1,\dots,k_1$, $A_i$ is isometric to $B_i$
\end{enumerate}
\end{definition}

\begin{remark}
It is an immediate consequence of the definitions that if $(Y,d)$ and $(Z,d')$ are isometric metric spaces and $\mathcal{A}$ is a gl-partition of $Y$, then there 
exists a gl-partition of $Z$, similar to $\mathcal{A}$.
\end{remark}

We state now a lemma that gives an insight to the structure of a finite metric space that has big diameter compared to the number of its elements.

\begin{lemma}[Distant Galaxies Lemma]\label{dgl}
Let $(Y,d)$ be a finite metric space and $a\in\mathbb{Z}$ greater than 2. Suppose that $Y$ has $n$ elements and diameter greater than $(2a+1)^{n+2}$. Then there exists a 
gl-partition $\mathcal{A}=(\{A_i\}_i,n,a)$ of $Y$.
\end{lemma}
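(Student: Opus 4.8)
The plan is to build the gl-partition by an iterative ``scale-separation'' argument, working through a nested sequence of thresholds $(2a+1)^0 < (2a+1)^1 < \dots < (2a+1)^{n+1}$. At stage $k$ I would consider the equivalence relation $\sim_k$ on $Y$ generated by ``$d(x,y) \leqslant (2a+1)^k$'', i.e. two points are related if they are joined by a chain of points with consecutive gaps at most $(2a+1)^k$. Let $\mathcal{P}_k$ be the resulting partition of $Y$ into $\sim_k$-classes. These partitions are coarsenings of one another: $\mathcal{P}_0$ refines $\mathcal{P}_1$ refines $\dots$ refines $\mathcal{P}_{n+1}$. Since $\mathcal{P}_0$ has at most $n$ blocks and $\mathcal{P}_{n+1}$ has at least $2$ blocks (because $\mathrm{diam}(Y) > (2a+1)^{n+2} > (2a+1)^{n+1}$, no single $\sim_{n+1}$-chain can span the whole space — here I would need the elementary observation that within one $\sim_k$-class of $p$ points the diameter is at most $(p-1)(2a+1)^k < p\,(2a+1)^k$), the number of blocks of $\mathcal{P}_k$ takes values in $\{2,\dots,\leqslant n\}$ and is nonincreasing in $k$. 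By pigeonhole there is an index $k^\ast \in \{1,\dots,n+1\}$ with $\mathcal{P}_{k^\ast} = \mathcal{P}_{k^\ast - 1}$: the partition does not change when we relax the scale from $(2a+1)^{k^\ast-1}$ to $(2a+1)^{k^\ast}$.

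Next I would take $\mathcal{A}$ to be this stable partition $\mathcal{P}_{k^\ast}$, padded out to exactly $n$ labelled sets $A_1,\dots,A_n$ by repeating blocks (allowed by condition (1) of a gl-partition, which permits $A_i = A_j$), and I would verify the three defining properties. Conditions (1) and (2) are immediate since $\mathcal{P}_{k^\ast}$ is a genuine partition. For condition (3): fix a block $A_i$. Any point $y \notin A_i$ lies in a different $\sim_{k^\ast}$-class, hence also in a different $\sim_{k^\ast-1}$-class by stability, so $d(A_i, y) > (2a+1)^{k^\ast-1}$; thus $d(A_i, Y\smallsetminus A_i) > (2a+1)^{k^\ast-1}$. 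On the other hand each block, being a single $\sim_{k^\ast-1}$-class with at most $n$ points, has diameter at most $(n-1)(2a+1)^{k^\ast-1}$, and of course $1 \leqslant (2a+1)^{k^\ast-1}$. Hence
$$
\max\{\mathrm{diam}(A_j),1 \mid j\} \leqslant (n-1)(2a+1)^{k^\ast-1} \leqslant n\,(2a+1)^{k^\ast-1},
$$
while $d(A_i,Y\smallsetminus A_i) > (2a+1)^{k^\ast} = (2a+1)\cdot(2a+1)^{k^\ast-1}$. Since $a > 2$ gives $2a+1 > n$? — no, $n$ is unrelated to $a$, so this is exactly where the argument as sketched is too crude and must be refined.

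The main obstacle is therefore the quantitative gap in condition (3): a naive diameter bound of $(n-1)(2a+1)^{k^\ast-1}$ for a block is not beaten by the separation $(2a+1)^{k^\ast}$ unless $2a+1 > (n-1)\cdot(\text{slack factor }a)$, which need not hold. The fix is to sharpen the diameter estimate by an internal recursion: within a single $\sim_{k^\ast-1}$-class one re-runs the same scale-separation at the finer scales $(2a+1)^0,\dots,(2a+1)^{k^\ast-1}$, showing inductively that a $\sim_j$-class on $p$ points actually has diameter $\leqslant (2a+1)^{j} \cdot c_p$ with $c_p$ controlled by the number of points rather than by $j$, or — cleaner — one chooses the stable index more carefully so that the block sizes at that index are small. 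Concretely I would argue: if every $\sim_{k}$-class had diameter exceeding $a$ times its ``$\sim_{k-1}$-refinement scale,'' one could propagate a lower bound on $\mathrm{diam}(Y)$ of the form $(2a+1)^{n}$ or more, using that each relaxation step that genuinely merges blocks multiplies the attainable diameter by at least $(2a+1)$, and there are at most $n-1$ such merges; the hypothesis $\mathrm{diam}(Y) > (2a+1)^{n+2}$ leaves enough room to locate an index where the block diameters are small relative to $(2a+1)^{k^\ast-1}$. Making this bookkeeping precise — tracking simultaneously the number of merges, the block sizes, and the exponent budget $n+2$ — is the technical heart of the proof; everything else is the routine verification above.
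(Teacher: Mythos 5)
Your proposal has a genuine gap, and you have in fact located it yourself: with the fixed thresholds $(2a+1)^k$, a stable block of the chain-partition $\mathcal{P}_{k^\ast}$ is only guaranteed to be separated from its complement by more than $(2a+1)^{k^\ast}$, while its diameter can be as large as $(n-1)(2a+1)^{k^\ast-1}$, so condition (3), which demands separation greater than $a$ times the largest block diameter, fails as soon as $2a+1\not> a(n-1)$. The ``fix'' you sketch (re-running the scale separation inside a block, or choosing a stable index where block sizes are small) is not carried out, and it is not clear it can be: with $n+2$ scales and up to $n-1$ genuine merges, the pigeonhole only guarantees isolated stable transitions, not a run of stability long enough to beat the factor $a(n-1)$. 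So the technical heart of the proof is exactly the part that is missing.

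The paper closes this gap by making the threshold \emph{adaptive} rather than geometric. Starting from singletons $A_0(i)=\{y_i\}$ with $d_0=1$, it sets $A_m(i)=\{y\in Y\mid d(y,A_{m-1}(i))\leqslant a\cdot d_{m-1}\}$ where $d_{m-1}=\max_j\{\mathrm{diam}(A_{m-1}(j)),1\}$ is the \emph{actual} current maximal diameter. With this definition, condition (3) is automatic at the first stabilization step $k$: if $A_{k+1}(i)=A_k(i)$ then by construction no point outside $A_k(i)$ lies within $a\cdot d_k$ of it, and $d_k$ is precisely the quantity $\max\{\mathrm{diam}(A_j),1\}$ appearing in condition (3). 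The only estimates then needed are that each growth step multiplies $d_m$ by at most $2a+1$ (giving $d_m\leqslant(2a+1)^m$), that each non-stable step merges at least two clusters so $k\leqslant n+1$, and hence that no cluster can be all of $Y$ because that would force $\mathrm{diam}(Y)\leqslant(2a+1)^{n+1}<(2a+1)^{n+2}$. In short: your scaffolding (iterate, stabilize, bound the number of steps by $n+1$, use the diameter hypothesis to rule out a single block) matches the paper's, but the separation-versus-diameter inequality cannot be obtained from fixed powers of $2a+1$; you need the growth radius at each stage to be slaved to the measured diameter of the clusters at that stage.
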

What we state in this lemma is intuitively obvious, since one expects that if we have very few points to distribute on a great distance, then distant groups of points will be 
formed, forming in a way distant galaxies in $Y$.
\begin{proof}
Suppose that $Y=\{y_1,\dots,y_n\}$. For any $i=1,\dots,n$, we set
$$A_0(i) = \{y_i\}\quad\text{and}\quad d_0=1$$
and we define inductively, for any positive integer $m$:
\begin{center}\begin{tabular}{l l}
& $A_m(i) = \{y\in Y\mid d(y,A_{m-1}(i))\leqslant a\cdot d_{m-1}\}$\\
$\phantom{x}$ & \\
& $d_m = \max\limits_{1\leqslant j\leqslant n} \{diam(A_m(j)),1\}$
\end{tabular}\end{center}
Then, for any $m>0$ and $i=1,\dots,n$, we have that
$$d_m \leqslant diam(A_{m-1}(i))+2ad_{m-1}$$
thus,
$$d_m\leqslant (2a+1)d_{m-1}$$
and finally
$$d_m\leqslant (2a+1)^m$$
Since $Y$ has $n$ elements, for any $i=1,\dots,n$, the sequence $\{A_m(i)\}_m$ is finally constant. So, let $k$ be the minimum positive integer such that, for all $i=1\dots,n$, we have that $A_k(i)=A_{k+1}(i)$. 
We then denote the set $A_k(i)$ by $A_i$.\\
Note that, from the construction of the sets $\{A_i\}_i$, we have that, for any $i=1,\dots,n$,
$$d(A_i,Y\smallsetminus A_i)> a \cdot\max\{diam(A_i),1\mid i=1,\dots,n\}$$
We will show that, for any $i\neq j\in\{1\dots,n\}$, either $A_i\cap A_j=\emptyset$ or $A_i=A_j$.\\
Let $i\neq j\in\{1,\dots,n\}$ such that $A_i\cap A_j\neq\emptyset$. Then, for any $y\in A_i$ we have that $d(y,A_j)\leqslant d_k$, so $y\in A_{k+1}(j)=A_j$.
 Therefore $A_i\subset A_j$. Similarly, get that $A_j\subset A_i$, hence $A_i=A_j$.\\
In order to proceed we have to show that the steps needed to define the sets $A_i$ are at most $n+1$. 
In each step prior to the $k^{th}$, at least two of the sets $\{A_m(i)\}_{i=1,\dots,n}$ have a non-empty intersection. So, eventually these two sets get identified. An example is illustrated in figure \ref{Fig:8}.
\begin{figure}[ht]
   \centering
       \includegraphics[scale=0.8]{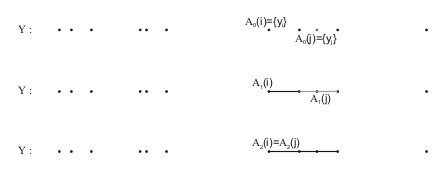}
       \caption{}
    \label{Fig:8}
\end{figure}\\
Therefore, we need at most $n+1$ steps to define the sets $A_i$, so $k\leqslant n+1$.\\
Finally, we will show that, for any $i=1,\dots,n$, $A_i\neq Y$:\\
Suppose, to the contrary, that there exists $i\in\{1,\dots,n\}$, such that $A_i=Y$. Then:
$$diam(Y)= diam (A_i)\leqslant d_k \leqslant (2a+1)^{k}\Rightarrow$$
$$(2a+1)^{n+2}\leqslant (2a+1)^{k}\Rightarrow k\geqslant n+2$$
But this contradicts the fact that $k$ is at most $n+1$.

Therefore, $A_i\neq Y$, for any $i\in\{1,\dots,n\}$. We conclude that $\mathcal{A}=(\{A_{i_j}\},n,a)$ is a gl-partition of the metric space $Y$.
\end{proof}

\begin{theorem}\label{thm:sphere}
Let $G=\langle S\rangle$ be a finitely generated group and $X=\Gamma(G,S)$. If there are 
$a,n\in\mathbb{N}$ with $a\geqslant 100$ and $n\geqslant 2$, such that a sphere of radius $(2a+1)^{n+2}$ in $X$ has at most $n$ elements, then $G$ is virtually cyclic.
\end{theorem}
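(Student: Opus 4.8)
The plan is to derive the hypothesis of Theorem~\ref{thm:harpe} from the single-sphere condition, so that virtual cyclicity follows immediately. Concretely, I would show that if one sphere $S(r)$ with $r=(2a+1)^{n+2}$ has at most $n$ elements, then in fact $|S(kr)|\leqslant n$ for \emph{all} positive integers $k$; this produces a sequence $r_i = ir \to \infty$ along which the sphere size stays bounded (indeed constant up to passing to a subsequence), which is exactly what Theorem~\ref{thm:harpe} needs.

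The engine for propagating the bound is the Distant Galaxies Lemma together with a translation argument. First, apply Lemma~\ref{dgl} to the finite metric space $Y=S(r)$ (with its induced path metric, or the $d_S$-metric — one should be a little careful here): since $|Y|\leqslant n$ and $\mathrm{diam}(Y)$ would have to exceed $(2a+1)^{n+2}$ to invoke the lemma, one first handles the easy case where $\mathrm{diam}(S(r))\leqslant (2a+1)^{n+2}$ separately, and otherwise obtains a gl-partition $\mathcal{A}=(\{A_i\}_i, n', a)$ of $S(r)$ with $n'\leqslant n$. The gl-partition splits the sphere into ``galaxies'' that are mutually separated by distances far exceeding their own diameters. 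The key geometric point is that such a separated configuration on a sphere of radius $r$ forces a bottleneck: translating a galaxy $A_i$ by a group element $g$ with $gA_i$ meeting a different galaxy (or meeting $B(r)\smallsetminus S(r)$, or the unbounded part) lets one run the same ``path must cross a small set'' contradiction as in the proof of Theorem~\ref{enddepth} and Proposition~\ref{OBSS}. I expect the cleanest route is: use the galaxies to build, for each scale $kr$, compact sets $K_i$ of uniformly bounded diameter whose complements in a large neighbourhood have two large-diameter components, verify conditions (1)--(3) of Proposition~\ref{OBSS} directly, and conclude $e(G)>1$; then bound $|S(kr)|$ by a function of $n$ alone by a counting/isometry argument on the finitely many ``galaxy shapes'' that can occur.

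The main obstacle, and where the real work lies, is the inductive step showing that the bound on $|S(r)|$ genuinely controls $|S(kr)|$ (or at least $|S(r_i)|$ along some unbounded sequence). The difficulty is that a priori spheres can grow, and one must use the specific arithmetic $r=(2a+1)^{n+2}$ with $a\geqslant 100$ to guarantee that the separation produced by Lemma~\ref{dgl} dominates all relevant error terms — in particular that the galaxies are separated by more than $2r$, so that translating by a radius-$r$ element cannot accidentally merge two of them without crossing a controlled obstruction. Once the separation estimates are pinned down, the structure is rigid enough that the argument of Theorem~\ref{enddepth} (left-translate, find a forced crossing of a small set, translate back to reach a contradiction) applies almost verbatim, and feeds into Proposition~\ref{OBSS}. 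So the strategy is: (i) reduce to the large-diameter case; (ii) extract galaxies via Lemma~\ref{dgl}; (iii) promote the galaxy structure to a sequence of configurations satisfying Proposition~\ref{OBSS}'s hypotheses; (iv) invoke Proposition~\ref{OBSS} and Hopf's Theorem (Theorem~\ref{hopf}) to get $e(G)=2$, hence $G$ virtually $\mathbb{Z}$ — or alternatively route everything through Theorem~\ref{thm:harpe} once the bounded-sphere sequence is in hand.
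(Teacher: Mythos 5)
Your proposal correctly identifies the Distant Galaxies Lemma as the engine, but it stalls exactly where you admit ``the real work lies'': you never actually prove that the bound $|S(r)|\leqslant n$ for the single radius $r=(2a+1)^{n+2}$ propagates to a bound on $|S(kr)|$ for all $k$ (or along any unbounded sequence of radii), and without that neither the reduction to Theorem~\ref{thm:harpe} nor the construction of compact sets satisfying the hypotheses of Proposition~\ref{OBSS} gets off the ground. The steps you sketch for this propagation --- translate a galaxy by a group element and run the crossing argument of Theorem~\ref{enddepth} ``almost verbatim'' --- are not an argument: the translation trick in Theorem~\ref{enddepth} relies on having a bounded complementary component to push a ball into, which is not available here, and nothing in your outline explains why a sphere of larger radius could not simply contain far more than $n$ elements. (A minor point: your ``easy case'' $\mathrm{diam}(S(r))\leqslant(2a+1)^{n+2}$ never arises for infinite $G$, since a bi-infinite geodesic $\gamma$ through $e$ gives $\gamma_{r},\gamma_{-r}\in S(r)$ and hence $\mathrm{diam}(S(r))\geqslant 2r$.)

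The paper sidesteps the propagation problem entirely: it never bounds spheres of any other radius and never invokes Proposition~\ref{OBSS} or Theorem~\ref{thm:harpe} in this proof. It considers the spheres $S_i=S(\gamma_i,r)$ centred at the points of a bi-infinite geodesic $\gamma$; these are all isometric to $S(r)$, so they carry similar gl-partitions with a common bound $D$ on the galaxy diameters, and $r>aD\geqslant 100D$. The key computation is then local: for $x\in B(\gamma_0,39D)$ an intermediate-value argument along $\gamma$ produces $m$ with $d(x,\gamma_m)=r$, and the estimate $d(x,\gamma_{r+m})\leqslant 79D<aD$ forces $x$ into the galaxy of $S_m$ containing $\gamma_{r+m}$, whence $d(x,\gamma_{r+m})\leqslant D$. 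Thus the $39D$-neighbourhood of $\gamma$ collapses into its $D$-neighbourhood, and by homogeneity and connectedness all of $X$ lies within distance $D$ of $\gamma$; hence $X$ is quasi-isometric to $\mathbb{Z}$ and $e(G)=2$. To make your route work you would essentially have to establish this containment anyway, at which point the detour through bounded sphere sequences and Theorem~\ref{thm:harpe} becomes superfluous.
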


\begin{proof}
This is trivial if $G$ is finite, so suppose that $G$ is infinite, thus $e(G)\geqslant 1$.\\
For simplicity, let $d=d_S$. There exists a bi-infinite geodesic path, $\gamma = (\dots,\gamma_{-1},\gamma_0, \gamma_1,\dots)$, of vertices in $X$, 
where $\gamma_0$ is the identity element of $G$. Let $r=(2a+1)^{n+2}$ and for any $i\in\mathbb{Z}$ denote the sphere 
$S(\gamma_i,r)$ by $S_i$.\\
Since $\gamma_{i+r},\gamma_{i-r}\in S_i$, we get that $diam(S_i)>r$. Therefore, from the Distant Galaxies Lemma, 
it follows that for any $i\in\mathbb{Z}$, there exists a gl-partition, of the set $S_i$. On the other hand, for any $i,j\in\mathbb{Z}$, the sets $S_i$ and $S_j$ are isometric, 
so there exist similar gl-partitions of these sets. Thus, for any $i\in\mathbb{Z}$, let $\mathcal{A}_i=(\{A_l(i)\}_l,k,a)$ be a gl-partition of $S_i$, such that for any 
$j\in\mathbb{Z}$, $\mathcal{A}_i$ and $\mathcal{A}_j$ are similar. Let
$$D_i=\max\{diam(A_l(i)),1\mid l=1,\dots,k\}$$
For any $i,j\in\mathbb{Z}$, since $\mathcal{A}_i$ and $\mathcal{A}_j$ are similar gl-partitions, $D_i=D_j$, so we denote $D_i$ by $D$. Also, for any $i\in\mathbb{Z}$, we denote by $A_i$ the set of the gl-partition $\mathcal{A}_i$, that $\gamma_{i+r}$ belongs to.
\begin{figure}[ht]
   \centering
       \includegraphics[scale=0.7]{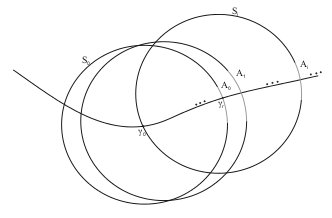}
       \caption{}
    \label{Fig:9}
\end{figure}\\
Then, for any $i\in\mathbb{Z}$, we have that
$$diam(A_i)\leqslant D\quad\text{and}\quad d(A_i, S_i\smallsetminus A_i)>aD$$
We note that $r>aD\geqslant 100D$.\\

Let $B=B(\gamma_0,39D)$ and $x\in B$, then:
\begin{enumerate}[(a)]
\item $|d(x,\gamma_i)-d(x,\gamma_{i+1})|\leqslant d(\gamma_i,\gamma_{i+1})=1,\quad$for any $i\in\mathbb{Z}$
\item $d(x,\gamma_{40D-r})\leqslant d(x,\gamma_0)+d(\gamma_0,\gamma_{40D-r})\leqslant 39D+r-40D<r$
\item $d(x,\gamma_{-40D-r})\geqslant d(\gamma_0,\gamma_{-40D-r})-d(\gamma_0,x)\geqslant 40D+r-39D>r$
\end{enumerate}
Therefore, there exists $m\in\{-40D-r,\dots,40D-r\}$, so that $d(x,\gamma_m)=r$, thus $x\in S_m$.\\
Furthermore:
$$d(x,A_m)\leqslant d(x,\gamma_{r+m})\leqslant d(x,\gamma_0)+d(\gamma_0,\gamma_{r+m})\leqslant 79D<aD$$
Thus, $x\in A_m$ and $d(x,\gamma_{r+m})\leqslant D$, where $r+m\in\{-40D,\dots,40D\}$. We have shown therefore that
$$B\subset \bigcup\limits_{i=-40D}^{40D} S(\gamma_i,D)$$
\begin{figure}[ht]
   \centering
       \includegraphics[scale=1]{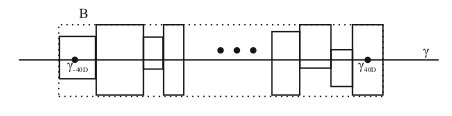}
       \caption{}
    \label{Fig:10}
\end{figure}\\
Now, since balls of the same radius in $X$ are isometric, by moving along the path $\gamma$ we can easily see that
$$X=\bigcup\limits_{i\in\mathbb{Z}} S(\gamma_i,D)$$
Therefore $X$ is quasi-isometric to $\mathbb{Z}$, so $e(G)=2$.
\end{proof}

\subsection*{ACKNOWLEDGMENTS} The author is grateful to Panagiotis Papazoglou for valuable conversations, for his encouragement and for his essential remarks on an earlier version of this paper.\\
This is a part of the author's PhD work and it was partially supported by the Greek Scholarship Foundation.

\end{document}